\newtheorem{theorem}{Theorem}[section]
\theoremstyle{definition}
\theoremstyle{remark}
\newtheorem{remark}[theorem]{Remark}
\numberwithin{equation}{section}
\begin{document}
\title[Global stability]{Global stability\\for a coupled physics inverse problem}
\author{Giovanni Alessandrini}
\address{Dipartimento di Matematica e Geoscienze, Universit\`a degli Studi di Trieste, Italy}

\email{alessang@units.it}
 
\thanks{This work is supported by FRA2012 `Problemi Inversi', Universit\`a degli Studi di Trieste}

\subjclass[2000]{} \keywords{}

\begin{abstract} We prove a global H\"older stability estimate for a hybrid inverse problem combining microwave imaging and ultrasound. The principal features of this result are that we assume to have access to measurements associated to a single, arbitrary and possibly sign changing solution of a Schr\"odinger equation, and that zero is allowed to be an eigenvalue of the equation.
\end{abstract}

\maketitle
\section{Introduction}
In this note we consider an inverse problem with internal measurements. Given the Schr\"odinger equation
\begin{equation}\label{sch}
\Delta u + q u = 0 \, \text{ in } \Omega \ , 
\end{equation}
in a bounded Lipschitz domain $\Omega$, the inverse problem consists of finding the coefficient $q=q(x)\ge const.>0$ given the interior measurement $q u^2$ and the boundary data $u|_{\partial \Omega}$ for one nontrivial solution $u$.
This is a simplified version of an hybrid inverse problem introduced by
	Ammari, Capdeboscq, De Gournay, Rozanova-Pierrat and Triki \cite{ammarieco}  in which the internal electromagnetic parameters of a body are to be detected by illuminating it by microwaves and, simultaneously,  by focusing ultrasonic waves  on a small portion of it. In mathematical terms, they deal with the equation
\begin{equation}\label{sch2}
div ({a}\nabla u)+ {k^2}q u = 0 \, \text{ in } \Omega  \ ,
\end{equation}
where $\Omega$  represents the body, $a^{-1}>0$ models the magnetic permeability, $q>0$ the electric permittivity and $u$ the electric field.
The goal in \cite{ammarieco} is  to find  ${a}, q$ given the local energies $q u^2$ , $a|\nabla u|^2$ when such measurements are available for \emph{several} solutions $u$ and wavenumbers $k$. 

For the simplified equation \eqref{sch}, Triki \cite {triki} obtained a uniqueness result for the determination of $q$ given the local energy $q u^2$ for one solution $u$ corresponding to prescribed Dirichlet data,  in \cite {triki} also one result of local stability is found.

In this note we shall investigate global stability in the same setting.

In contrast to the general  phenomenology of inverse boundary value problems and of inverse scattering problems, which typically show severe ill-posedness ( Mandache \cite{mandacheIP}, Di Cristo and Rondi  \cite{dicrondi}), the emerging methodologies  of imaging based on the coupling of different physical modalities (see Bal  \cite{balinside} for an overview), enable to acquire interior measurements which lead to the expectation of a much better behavior in terms of stability. Such an expectation has indeed been confirmed in many cases, let us cite Bal and Uhlmann \cite{baluhl},  Kuchment and Steinhauer \cite{kush}, Montalto and Stefanov \cite{montstef}, just to mention, with no ambition of completeness, a few samples of an impressively growing literature. However, a recurrent feature of the available results of stability is that suitable nondegeneracy conditions on the solutions of the underlying equations are needed. Depending on the problem treated, and on the number of coefficients that  are to be determined,  it  is required that the solution $u$\, or its gradient, does not vanish. In other cases it is required that the Jacobian matrix associated to an array of solutions is nonsingular, and even bigger matrices formed by an array of solutions and their derivatives may come into play. Such nondegeneracy conditions may be difficult to achieve in general. Typically, it is possible to prove the existence of nondegenerate sets of solutions, but,  in presence of unknown coefficients, it may be unclear how one can drive the system from the exterior so that the solutions satisfy the desired nondegeneracy conditions. It should be mentioned that, if one is free to tune up one parameter in the governing equation, like the wavenumber $k$ in \eqref{sch2}, then nondegeneracy can be guaranteed for at least some choice of $k$, this is a remarkable recent result by Alberti \cite{alberti}. 

Here,  instead, we intend to examine the prototypical case \eqref{sch} when measurements can be taken at one fixed wavenumber only, conventionally set as $k=1$. For such an equation
the associated, Dirichlet or Neumann, direct boundary value problems might be not well posed if $0$ is an eigenvalue, or well posed with very large costants, if $0$ is close to an eigenvalue. Hence it may be troublesome to control the interior behavior of a solution by appropriate choices of boundary data. And, generically,  solutions to \eqref{sch} may vanish inside, and may have critical points.

Let us show, by a simple example in dimension $n=1$, what kind of pathologies one might encounter.

Let us fix $0<r<R$ and, for every $m=1,2,\ldots$, let us set
\begin{eqnarray*}
q_m(x)  = 
\left\{
\begin{array}{rl}
A_m & \ \ \ \  \mbox{if}\ |x|<r \ ,\\
 1 &\ \ \ \ \mbox{if}\ r\le |x| \le R\ ,
\end{array}
\right.
\end{eqnarray*}
where
\begin{equation*}
A_m= \left(\frac{\pi}{2}+2m\pi\right)^2r^{-2} \ . 
\end{equation*}
A solution to $u_{xx}+q_m u = 0$  in $(-R,R)$ is
\begin{eqnarray*}
u_m(x)  = 
\left\{
\begin{array}{rl}
\frac{1}{\sqrt{A_m}}\cos (\sqrt{A_m}x) & \ \ \ \  \mbox{if}\ |x|<r \ ,\\
 -\sin (|x|-r) &\ \ \ \ \mbox{if}\ r\le |x| \le R\ .
\end{array}
\right.
\end{eqnarray*}
Note that in the interval $(-r,r)$ $u_m$ becomes very small and highly oscillating as $m$ increases. We have
\begin{equation*}
\|q_{2m}u_{2m}^2- q_{m}u_{m}^2\|_{\infty} \le 2 \text{ for every } m=1,2,\ldots \ ,
\end{equation*}
 whereas, for any $p \ , 1 \le p \le \infty$
\begin{equation*}
\|q_{2m}- q_{m}\|_{p} \rightarrow \infty \text{ as } m\rightarrow \infty \ .
\end{equation*}
Thus, in other words, the error on the measurement $qu^2$ does not dominate the error on $q$.  

Here we shall present a global stability result of conditional type,  which makes use of measurements for only one arbitrary, nontrivial, but possibly sign changing, solution, and when no spectral condition of the underlying equation is assumed, that is, we admit that $0$ might be an eigenvalue for equation \eqref{sch}. 

Let us remind that for a different hybrid problem, coupling elastography and magnetic resonance, Honda, McLaughlin and Nakamura \cite{nakajoice} also obtained a global stability of H\"older type, when the underlying solution may vanish somewhere. There are,  however, substantial differences in the governing equations which impose much different a priori assumptions and methods.

Let us now describe the a priori assumptions that we shall use. On the unknown coefficient $q\in L^{\infty}(\Omega)$ we require that for a given $K\ge 1$
we have
\begin{equation}\label{K}
K^{-1} \le q \le K \ ,  \text{ a.e. in } \Omega \ .
\end{equation}
Also, we consider one weak solution  $u\in W^{1,2}(\Omega)\cap C(\overline{\Omega})$ to \eqref{sch} and we prescribe for a given $E>0$ the following global energy bound
\begin{equation}\label{E}
\int_\Omega\left(u^2+ |\nabla u|^2\right) \le E^2\ .
\end{equation}
Regarding the interior measurement $qu^2$ we require the following nondegeneracy in average, that is we are given $H>0$ such that
\begin{equation}\label{H}
\int_\Omega qu^2 \ge H^2\ .
\end{equation}
Further we shall also need to specify in a quantitative form the Lipschitz regularity of the domain $\Omega$. For this purpose some notation and definitions are needed.

Given $x\in \mathbb R^n$, we shall denote $x=(x',x_n)$, where $x'=(x_1,\ldots,x_{n-1})\in\mathbb R^{n-1}$, $x_n\in\mathbb R$.
Given $x\in \mathbb R^n$, $r>0$, we shall use the following notation for balls and cylinders
\begin{equation*}
   B_r(x)=\{y\in \mathbb R^n\ |\ |y-x|<r\}, \quad  B_r=B_r(0)\ ,
\end{equation*}
\begin{equation*}
   B'_r(x')=\{y'\in \mathbb R^{n-1}\ |\ |y'-x'|<r\}, \quad  B'_r=B'_r(0)\ ,
\end{equation*}
\begin{equation*}
   \Gamma_{a,b}(x)=\{y=(y',y_n)\in \mathbb R^n\ |\ |y'-x'|<a, |y_n-x_n|<b\}, \quad \Gamma_{a,b}=\Gamma_{a,b}(0)\ .
\end{equation*}

We shall say that 
$\Omega$ is of \emph{Lipschitz class} with
constants $\rho$, $M>0$, if, for any $P \in \partial\Omega$, there exists
a rigid transformation of coordinates under which $P=0$
and
\begin{equation*}
  \Omega \cap \Gamma_{\frac{\rho}{M},\rho}(P)=\{x=(x',x_n) \in \Gamma_{\frac{\rho}{M},\rho}\quad | \quad
x_{n}>Z(x')
  \}\ ,
\end{equation*}
where $Z:B'_{\frac{\rho}{M}}\to\mathbb R$ is a Lipschitz function satisfying
\begin{equation*}
Z(0)=0,
\end{equation*}
\begin{equation*}
\|Z\|_{{L}^{\infty}(B'_{\frac{\rho}{M}})}+
  \rho\|\nabla Z\|_{{L}^{\infty}(B'_{\frac{\rho}{M}})} \leq M\rho \ .
\end{equation*}

We shall also use the following notation.

For every $d>0$ we denote
\begin{equation*}
\Omega_d =\left\{x\in \Omega\  | \ dist(x, \partial \Omega) > d \right\} \ .
\end{equation*}

By $|\Omega|$ we shall denote the measure of $\Omega$.

We can now state our main result.

\begin{theorem}\label{main} Let $\Omega$ be of {Lipschitz class} with
constants $\rho$, $M>0$. Let $q_1, q_2\in L^{\infty}(\Omega)$ satisfy \eqref{K}, let $u_1,u_2\in W^{1,2}(\Omega)\cap C(\overline{\Omega})$ be solutions to \eqref{sch} when $q=q_1, q_2$ respectively. Assume that $u_1,u_2$ satisfy the a priori assumptions \eqref{E}, \eqref{H} and suppose that, 
for a given $\varepsilon >0$ ,
\begin{equation}\label{interror}
 \|q_1u_1^2-q_2u_2^2\|_{L^{\infty}(\Omega)} \le \varepsilon \ ,
\end{equation}
 and also
\begin{equation} \label{bdryerror}
\||u_1|-|u_2|\|_{L^{\infty}(\partial\Omega)} \le \sqrt{K\varepsilon} \ .
\end{equation}
 Then, for every $d>0$, there exists $\eta\in (0,1)$ and $C>0$, only depending on $d, K, E, H$ and on $\rho, M, |\Omega|$ such that
\begin{equation}\label{intstab}
 \|q_1-q_2\|_{L^{1}(\Omega_d)} \le C\left(\varepsilon^{1/2}+\varepsilon \right)^{\eta} \ ,
\end{equation}
\end{theorem}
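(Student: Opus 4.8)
The plan is to bound $q_1-q_2$ pointwise by interpolating between a region where the solutions are comparable in size (where we can divide by $u_i^2$ safely) and the complementary small region (where $q$ is simply bounded by $K$, so the contribution to the $L^1$ norm is controlled by the measure of that region). On the bulk where, say, $u_1^2 \ge \delta$ for a parameter $\delta$ to be optimized, we write
\begin{equation*}
q_1 - q_2 = \frac{q_1 u_1^2 - q_2 u_2^2}{u_1^2} + q_2\,\frac{u_2^2 - u_1^2}{u_1^2},
\end{equation*}
so that $|q_1-q_2| \le \delta^{-1}\varepsilon + K\delta^{-1}|u_1^2-u_2^2| = \delta^{-1}\varepsilon + K\delta^{-1}|u_1-u_2|\,|u_1+u_2|$ there. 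This reduces matters to an $L^1$ (or $L^2$) estimate of $|u_1| - |u_2|$ in the interior, together with a quantitative measure estimate $|\{x\in\Omega_d : u_1^2 < \delta\}| \le \omega(\delta)$ with $\omega(\delta)\to 0$, and a lower bound for the measure of a region where $u_1^2$ is not small, coming from \eqref{H} and \eqref{E}.

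The heart of the argument is thus a quantitative unique continuation / stability estimate propagating the boundary smallness \eqref{bdryerror} of $|u_1|-|u_2|$ into the interior, given that $w := u_1 - u_2$ solves $\Delta w + q_1 w = (q_2 - q_1)u_2$ with a right-hand side we do not control, but $v := |u_1|-|u_2|$ has better structure: where the two solutions have the same sign, $v = \pm w$ and $v$ is an $H^1$ function vanishing nowhere more than $w$ does. The key algebraic identity to exploit is
\begin{equation*}
q_1 u_1^2 - q_2 u_2^2 = q_1(|u_1|-|u_2|)(|u_1|+|u_2|) + (q_1 - q_2)u_2^2,
\end{equation*}
which, combined with \eqref{interror}, says that on the set where $q_1=q_2$ fails to be small we still have $|v|(|u_1|+|u_2|)$ small up to an error term; the strategy is to turn \eqref{interror} and \eqref{bdryerror} into a global $H^1$-type smallness of $v$ via an energy estimate for the equation satisfied by $v$ away from the zero sets, then upgrade to an interior $L^\infty$ or $L^2$ bound by the quantitative Lipschitz-domain regularity (Caccioppoli plus De Giorgi–Nash–Moser, with constants depending on $\rho, M, |\Omega|, K, E$). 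This is where the hypothesis that $\Omega$ is Lipschitz with explicit constants $\rho,M$, and the global energy bound \eqref{E}, enter decisively.

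I expect the main obstacle to be the possible sign changes and the vanishing of $u_1, u_2$: the quantity $v=|u_1|-|u_2|$ is only Lipschitz, not harmonic-like, across the nodal sets, and the naive energy estimate for $w$ picks up the uncontrolled source $(q_2-q_1)u_2$. The remedy I would pursue is to test the equation for $w$ against $w$ itself over $\Omega$, integrate by parts using \eqref{bdryerror} on the boundary term, and absorb the source term by noting $\int_\Omega (q_2-q_1)u_2 w = \int_\Omega (q_2 u_2 - q_1 u_1)w + \int_\Omega q_1 w^2$; the first piece is controlled once we relate $q_2u_2 - q_1u_1$ to $q_1u_1^2-q_2u_2^2$ using that $|u_i|$ is bounded (by \eqref{E} and interior regularity) and the second piece combines with the Dirichlet energy. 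This yields $\|w\|_{L^2}^2 \lesssim \varepsilon$ up to the spectral-gap issue — and precisely because $0$ may be an eigenvalue, one cannot absorb $\int q_1 w^2$ freely; here I would instead argue on $v$ directly, or restrict to the good sign region and use a Poincaré-type inequality valid there, accepting a loss that is compensated by the Hölder exponent $\eta$. Finally, optimizing $\delta$ against $\varepsilon$ and the modulus $\omega(\delta)$ produces the exponent $\eta\in(0,1)$ and the constant $C$ in \eqref{intstab}, with the claimed dependences on $d,K,E,H,\rho,M,|\Omega|$.
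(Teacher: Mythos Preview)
Your high-level decomposition matches the paper's: write $(q_1-q_2)u_1^2 = (q_1u_1^2-q_2u_2^2) - q_2(u_1^2-u_2^2)$, control $|u_1|-|u_2|$ from the data, and handle the region where $u_1$ is small by quantitative unique continuation. But both substantive steps are left as genuine gaps.

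For the stability of $|u_1|-|u_2|$, your energy-testing outline does not close. Testing $\Delta w + q_1 w = (q_2-q_1)u_2$ against $w$ still leaves, after your manipulation, the term $\int (q_2u_2-q_1u_1)w$, and $q_2u_2-q_1u_1$ is not controlled by $q_1u_1^2-q_2u_2^2$ without dividing by $|u_i|$, which is precisely what you cannot do; and, as you yourself note, the possible eigenvalue at $0$ blocks any Poincar\'e-type absorption of $\int q_1 w^2$. ``Arguing on $v=|u_1|-|u_2|$ directly'' is not a plan: $v$ solves no useful PDE across the nodal sets. The paper resolves this with a specific construction: on each connected component of $\Omega\setminus(N_1\cup N_2)$, where both $u_i$ have a definite sign, it tests against $u_i\varphi^+$ with $\varphi^+=[\,|u_1|-|u_2|-2\sqrt{K\varepsilon}\,]^+$, which vanishes near the component boundary by \eqref{interror} and \eqref{bdryerror}. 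The sign-definiteness of the $u_i$ makes every resulting term have a favorable sign, so no spectral condition is needed, and one obtains the \emph{weighted} bound $\int_\Omega (|u_1|+|u_2|)(|u_1|-|u_2|)^2 \le C\varepsilon$.

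For the small-$u_1$ region, a bound $|\{x\in\Omega_d:u_1^2<\delta\}|\le\omega(\delta)$ with merely $\omega(\delta)\to0$ would not yield a H\"older rate; you need $\omega$ to be a power of $\delta$, and this does not follow from \eqref{E}, \eqref{H} alone. The paper invokes the Garofalo--Lin doubling inequality (obtained from Lipschitz propagation of smallness, which in turn uses \eqref{E}, \eqref{H} and the Lipschitz geometry) to show that $u_1^2$ is a Muckenhoupt $A_p$ weight on balls in $\Omega_d$, whence $\int_{\Omega_d}|u_1|^{-\delta}\le C$ for some explicit $\delta>0$. This negative-power integrability is then paired, via H\"older's inequality, with the weighted estimate above to give \eqref{intstab} with $\eta=\delta/(\delta+2)$; the level-set splitting and optimization in $\delta$ you propose are superseded by this single H\"older step.
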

\begin{remark}

We observe that \eqref{intstab} provides a global stability of H\"older type. The H\"older exponent  is expected to depend on the a priori data and it might get smaller and smaller as the a priori bounds deteriorate. 

Moreover, let us remark that, in view of \eqref{K}, it is easily seen that \eqref{intstab} holds, with  different constants $C$ and $\eta$, also when the $L^1$ norm is replaced by any $L^p$ norm, with $p<\infty$. 

Also, it is worth noticing that, if for a given $d>0$ it is known in addition that $q_1=q_2$ in $\Omega\setminus \overline{\Omega_d}$, then 
\eqref{bdryerror} is automatically satisfied when \eqref{interror} holds true. Obviously, in such a case, \eqref{intstab} can be improved to
\begin{equation*}
 \|q_1-q_2\|_{L^{1}(\Omega)} \le C\left(\varepsilon^{1/2}+\varepsilon \right)^{\eta} \ .
\end{equation*}
\end{remark}

In the next Section 2 we shall state two theorems, Theorem \ref{weight} and Theorem \ref{negint}, which constitute the main tools for the proof of Theorem \ref{main}, which is also given there. Sections 3 and 4 contain the proofs of Theorem \ref{weight} and Theorem \ref{negint}, respectively. 

\section{Proof of the main theorem}
We begin with a weighted stability estimate on the electric field intensity $|u|$.
\begin{theorem}\label{weight}
Let the assumptions of Theorem \ref{main} be satisfied. We have
\begin{equation}\label{wstab}
\int_{\Omega}\left(|u_1|+|u_2|\right)\left(|u_1|-|u_2|\right)^2 \leq C \varepsilon 
\end{equation}
where $C>0$ depends only on $K, E$ and on $|\Omega|$.
\end{theorem}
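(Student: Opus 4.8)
The plan is to exploit the variational structure of the Schrödinger equation, testing each equation against a function built out of the two solutions. Write $v_i = |u_i|$; since $u_i \in W^{1,2}(\Omega)$ one has $v_i \in W^{1,2}(\Omega)$ with $|\nabla v_i| = |\nabla u_i|$ a.e., and the equation $\Delta u_i + q_i u_i = 0$ yields, after multiplying by an appropriate test function $\varphi \in W^{1,2}(\Omega)$ and integrating by parts,
\begin{equation*}
\int_\Omega \nabla u_i \cdot \nabla \varphi = \int_\Omega q_i u_i \varphi + \int_{\partial\Omega} \varphi \, \partial_\nu u_i \ .
\end{equation*}
The key algebraic identity I would aim for is one expressing the left-hand side of \eqref{wstab} in terms of the measurement mismatch $q_1 u_1^2 - q_2 u_2^2$. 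A natural choice is to subtract the two weak formulations with a cleverly chosen test function; the quantity $(v_1+v_2)(v_1-v_2)^2$ suggests working with $\varphi$ proportional to $v_1 - v_2$ or to $v_1^2 - v_2^2$, so that cross terms reorganize into $q_1 u_1^2 - q_2 u_2^2 = q_1 v_1^2 - q_2 v_2^2$ plus terms controlled by \eqref{K} and \eqref{E}.

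Concretely, I expect the central computation to run as follows. Use $u_i(v_1-v_2)$ (equivalently $v_i(v_1-v_2)$, since $u_i v_i = v_i^2 \ge 0$... more carefully one tracks signs) as a test function in the $i$-th equation and subtract. On the principal-part side this produces $\int_\Omega (\nabla v_1 - \nabla v_2)\cdot\nabla(v_1-v_2) \cdot(\text{weights})$ together with a nonnegative Dirichlet-type term, while on the zeroth-order side one gets $\int_\Omega (q_1 v_1^2 - q_2 v_2^2)(v_1 - v_2) \cdot (\text{something})$. One then bounds $\int_\Omega (q_1 v_1^2 - q_2 v_2^2)(v_1-v_2)$ by splitting: the measurement term contributes at most $\varepsilon \int_\Omega |v_1 - v_2| \le \varepsilon |\Omega|^{1/2} E$ via Cauchy–Schwarz and \eqref{E}, and any leftover is absorbed. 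The boundary contribution $\int_{\partial\Omega}(v_1 - v_2)(\partial_\nu u_1 - \partial_\nu u_2)$ is handled using \eqref{bdryerror}, namely $\||u_1| - |u_2|\|_{L^\infty(\partial\Omega)} \le \sqrt{K\varepsilon}$, combined with an $L^1(\partial\Omega)$ bound on the normal derivatives coming from \eqref{E} and interior estimates; alternatively, one chooses the test function to vanish suitably or uses that $\partial_\nu u_i \cdot u_i$ integrates against the energy. The upshot is the desired bound $\le C\varepsilon$ with $C = C(K, E, |\Omega|)$.

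The main obstacle I anticipate is twofold. First, the arithmetic of choosing the right test function so that \emph{both} the bad gradient cross-terms and the sign-indefinite contributions collapse into exactly $(v_1+v_2)(v_1-v_2)^2$ on one side and the measurement difference on the other — this is the heart of the lemma and requires the identity
\begin{equation*}
q_1 v_1^3 + q_2 v_2^3 - q_1 v_1^2 v_2 - q_2 v_2^2 v_1 = (v_1 - v_2)(q_1 v_1^2 - q_2 v_2^2) + (q_1 v_1^2 v_2 + q_2 v_2^2 v_1) - (q_1 v_1 v_2^2 + q_2 v_2 v_1^2)
\end{equation*}
or a variant thereof, rearranged so the $q_i$-weighted piece becomes $(v_1+v_2)(v_1-v_2)^2$ up to terms multiplied by $q_1 - q_2$, which must then themselves be controlled — and here one may need to accept a term like $\int_\Omega (q_1-q_2) v_1 v_2 (v_1-v_2)$ and bound it crudely using \eqref{K} and Young's inequality to reabsorb the $(v_1-v_2)^2$ part into the left side. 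Second, the regularity needed to justify integration by parts and the boundary terms: since $u_i$ is only $W^{1,2}\cap C(\overline\Omega)$ and $\partial\Omega$ is merely Lipschitz, the normal derivative exists only in a weak (distributional) sense, so the boundary integral must be interpreted via the weak formulation and controlled through \eqref{E} rather than pointwise trace estimates. I would organize the proof to keep all boundary data appearing only through the $L^\infty(\partial\Omega)$ quantity in \eqref{bdryerror} paired against an energy-controlled dual quantity, so that the Lipschitz regularity of $\Omega$ enters only through standard trace inequalities with constants depending on $\rho, M$ — though, as the statement of Theorem \ref{weight} asserts dependence only on $K, E, |\Omega|$, I would expect the boundary term to in fact be dominated purely by $\varepsilon$ times an energy factor, the geometric constants canceling.
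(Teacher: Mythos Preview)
Your proposal has two genuine gaps that the paper's argument is designed to circumvent.

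First, you work with $v_i=|u_i|$ as if it were a solution. It is not: $|u_i|$ solves $\Delta v_i+q_iv_i=0$ only away from the nodal set $N_i=\{u_i=0\}$; across $N_i$ a singular measure appears (the jump of the normal derivative). So testing the equation globally on $\Omega$ with functions built from $v_1-v_2$ does not produce the clean identities you anticipate. The paper handles this by decomposing $\Omega\setminus(N_1\cup N_2)$ into its connected components $\Omega_j$; on each such component both $u_1$ and $u_2$ have constant sign and one may assume they are positive, so that $v_i=u_i$ there and the equation applies directly.

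Second, your plan to control the boundary contribution via $\int_{\partial\Omega}(v_1-v_2)\,\partial_\nu u_i$ is not feasible at this regularity: for $W^{1,2}$ solutions on a Lipschitz domain the normal derivative exists only in a weak dual sense, and nothing in the a~priori data \eqref{E} gives an $L^1(\partial\Omega)$ bound on it. This is also why your constant would pick up $\rho,M$ from trace inequalities, contradicting the stated dependence. The paper avoids boundary terms altogether by using the \emph{truncated} test function $\varphi^+=[u_1-u_2-2\sqrt{K\varepsilon}]^+$: the hypotheses \eqref{interror}, \eqref{bdryerror} force $||u_1|-|u_2||\le\sqrt{K\varepsilon}$ on $\partial\Omega_j\subset N_1\cup N_2\cup\partial\Omega$, so $\varphi^+$ vanishes near $\partial\Omega_j$ and $u_i\varphi^+\in W^{1,2}_0(\Omega_j)$. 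Testing each equation with $u_i\varphi^+$ and subtracting then yields, after a careful sign analysis of the three resulting integrals, the estimate on $\Omega_j$; summing over $j$ gives \eqref{wstab} with $C$ depending only on $K,E,|\Omega|$.
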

The proof is postponed to the next Section 3.
\begin{remark}
 Note that also a $L^3$ stability estimate, without weight, follows easily from \eqref{wstab}
\begin{equation*}
 \int_{\Omega}\left||u_1|-|u_2|\right|^3 \leq C \varepsilon \ .
\end{equation*}
\end{remark}
The following theorem consists of a quantitative form of the strong unique continuation property for solutions to equation \eqref{sch}.
\begin{theorem}\label{negint} Let $\Omega$ be of {Lipschitz class} with
constants $\rho$, $M>0$.
Let $q\in L^{\infty}(\Omega)$ satisfy \eqref{K} and let $u\in W^{1,2}(\Omega)\cap C(\overline{\Omega})$ be a solution to \eqref{sch}. Assume that $u$ satisfies the a priori assumptions \eqref{E}, \eqref{H}. 
Then, for every $d>0$, there exists $\delta ,  C>0$, only depending on $d, K, E, H$ and on $\rho, M, |\Omega|$ such that
\begin{equation}\label{eq:negint}
 \int_{\Omega_d} |u|^{-\delta}\le C \ .
\end{equation}
\end{theorem}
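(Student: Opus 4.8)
The plan is to prove Theorem~\ref{negint} by combining a three-spheres / doubling inequality for solutions of the Schr\"odinger equation \eqref{sch} with a covering argument, and then to upgrade the resulting pointwise lower bound on a suitable ball into an integrability estimate for a negative power of $|u|$. The starting point is that, by \eqref{H} and \eqref{E}, there is at least one point $x_0\in\Omega$ where $|u|$ is not too small: indeed $H^2\le\int_\Omega qu^2\le K\int_\Omega u^2\le K\,\|u\|_{L^\infty}^2|\Omega|$, and by interior (Moser/De~Giorgi--Nash) estimates for \eqref{sch}, which hold with constants controlled by $K$, $E$, $\rho$, $M$, one gets that on a ball of definite radius around some $x_0\in\Omega_{d/2}$ one has $|u|\ge c_0>0$ with $c_0$ depending only on the a~priori data. (One first moves the mass-concentration point into the interior using that the Dirichlet integral of $u$ is bounded.)

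Next I would invoke the quantitative strong unique continuation property for $W^{1,2}$ solutions of $\Delta u+qu=0$ with $q\in L^\infty$: by the Garofalo--Lin frequency function argument, or by Carleman estimates, one has a doubling inequality
\begin{equation*}
\int_{B_{2r}(x)}u^2\le N\int_{B_r(x)}u^2
\end{equation*}
for all balls $B_{2r}(x)\subset\Omega_{d/4}$, where the doubling constant $N$ depends only on $d$, $K$, $E$ and on $\rho$, $M$. Chaining this along a Harnack-type chain of balls from $x_0$ to an arbitrary point $x\in\Omega_d$ — the number of balls needed being bounded by a constant depending only on $d$, $\rho$, $M$, $|\Omega|$ (this is where the Lipschitz character of $\Omega$ enters, to guarantee $\Omega_{d/4}$ is connected with quantitatively controlled chains) — yields an $A_\infty$-type bound: there are constants $p_0>0$ and $C_1$, depending only on the a~priori data, such that
\begin{equation*}
\left(\frac{1}{|B_r(x)|}\int_{B_r(x)}u^2\right)^{1/2}\ge C_1\left(\frac{1}{|B_r(x)|}\int_{B_r(x)}|u|^{-p_0}\,\right)^{-1/p_0}\!,
\end{equation*}
equivalently a reverse-H\"older / weak-$(1,1)$ estimate for $\log(1/|u|)$. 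Concretely, the doubling inequality implies $\log|u|$ is in $BMO$ of $\Omega_d$ with norm controlled by $\log N$, and then the John--Nirenberg inequality gives $\int_{\Omega_d}|u|^{-\delta}\le C$ for $\delta$ small enough depending only on $\log N$ and $|\Omega_d|$. This is the cleanest route to \eqref{eq:negint}.

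The main obstacle I anticipate is the \emph{quantitative} bookkeeping: making sure that every constant — the Moser bound, the doubling constant $N$, the length of the Harnack chain, and finally $\delta$ — depends only on the listed quantities $d,K,E,H,\rho,M,|\Omega|$ and not, say, on a modulus of continuity of $q$ or on pointwise lower bounds for $|u|$ that we are not entitled to assume. In particular, the frequency-function argument must be run in a form valid for merely bounded $q$ (no regularity), which is classical but requires care near $\partial\Omega_{d/4}$; and the passage from a single interior ball where $|u|\ge c_0$ to a global doubling/BMO statement has to use that the zero set of $u$ cannot disconnect $\Omega_d$ from the good ball in a way that destroys chaining — which again follows from unique continuation, since $u\not\equiv0$. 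A secondary technical point is extracting the initial good point $x_0$ with the correct dependence: one should combine \eqref{H}, \eqref{K}, \eqref{E} with a Caccioppoli/Sobolev argument so that $x_0$ lies in $\Omega_{d/2}$ rather than merely in $\Omega$, which is needed so that the chain to $\Omega_d$ stays inside a region where interior estimates apply. Once these quantitative ingredients are assembled, the final step — choosing $\delta$ via John--Nirenberg — is routine.
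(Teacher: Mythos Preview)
Your proposal is correct and follows essentially the same route as the paper: Garofalo--Lin doubling for $u^2$, upgraded via Muckenhoupt-weight theory to integrability of a negative power of $|u|$, combined with a quantitative lower bound on $\int_{B_r(x)}u^2$ over interior balls. The paper packages things slightly differently: instead of locating a single good point $x_0$ and chaining, it invokes directly a \emph{Lipschitz propagation of smallness} theorem (their Theorem~\ref{theo:LipPropSmall}) giving $\int_{B_r(x)}u^2\ge C\int_\Omega u^2$ for every $x\in\Omega_r$, which absorbs your chaining and good-point arguments in one stroke; and instead of passing through $\log|u|\in BMO$ and John--Nirenberg, it quotes the equivalent $A_p$ formulation (doubling plus the De~Giorgi--Moser local boundedness bound gives $u^2\in A_p$, after Garofalo--Lin and Coifman--Fefferman), then covers $\Omega_d$ by $\sim|\Omega|d^{-n}$ balls and sums. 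One small point to tighten in your write-up: the implication ``doubling $\Rightarrow \log|u|\in BMO$'' is not automatic from doubling of the measure $u^2\,dx$ alone; you need to pair it with the local sup bound $\sup_{B_r}|u|\le C(\fint_{B_{2r}}u^2)^{1/2}$ (which you already mentioned) to get the $A_\infty$/BMO conclusion.
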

The proof can be found in the final Section 4.

Assuming the above two theorems proven, we can now complete the proof of Theorem \ref{main}.
\begin{proof}[Proof of Theorem \ref{main}]
We compute
\begin{equation*}
 (q_1-q_2)u_1^2= (q_1u_1^2-q_2u_2^2) - q_2(u_1^2 - u_2^2)
\end{equation*}
hence
\begin{equation*}
\int_{\Omega} |q_1-q_2|u_1^2\leq |\Omega|\varepsilon+ \int_{\Omega}q_2\left(|u_1|+|u_2|\right)||u_1|-|u_2||
\end{equation*}
and, by H\"older's inequality,
\begin{equation*}
\int_{\Omega} |q_1-q_2|u_1^2\leq C\left[\varepsilon+ \left(\int_{\Omega}(|u_1|+|u_2|)\right)^{1/2}\left(\int_{\Omega}\left(|u_1|+|u_2|\right)\left(|u_1|-|u_2|\right)^2\right)^{1/2}\right]
\end{equation*}
where $C>0$ only depends on $|\Omega|$ and on $K$, now using \eqref{E} and Theorem \ref{weight} we obtain
\begin{equation}\label{weightq}
\int_{\Omega} |q_1-q_2|u_1^2\leq C\left(\varepsilon+\varepsilon^{1/2}\right)
\end{equation}
where $C>0$ is a new constant which only depends on $|\Omega|, K$ and on $E$. Now, fixing $d>0$ and choosing $\delta>0$ according to Theorem \ref{negint}, by H\"older's inequality we get
\begin{equation*}
\int_{\Omega_d}|q_1-q_2|^{\frac{\delta}{\delta+2}}  \le \left(\int_{\Omega_d}|u_1|^{-\delta}\right)^{\frac{2}{\delta+2}}  \left( \int_{\Omega} |q_1-q_2|u_1^2\right)^{\frac{\delta}{\delta+2}} 
\end{equation*}
finally, applying Theorem \ref{negint} to $u_1$ and using \eqref{K}, \eqref{weightq} we arrive at \eqref{intstab} with $\eta = \frac{\delta}{\delta+2}$.
\end{proof}
\begin{remark}
Inequality \eqref{weightq} might also be used to obtain a localized H\"older stability with a uniform exponent on regions where the measurement $qu^2$ is bounded away from $0$. For any $t>0$, set $D_t =\left\{x\in \Omega| q_1u_1^2\ge t\right\}$ then \eqref{weightq} implies
\begin{equation*}
\|q_1-q_2\|_{L^{1}(D_t)} \leq \frac{KC}{t}\left(\varepsilon+\varepsilon^{1/2}\right) \ .
\end{equation*}
\end{remark}
\section{The weighted estimate on the electric field}
\begin{proof}[Proof of Theorem \ref{weight}]
Denote by $N_i=\left\{x\in \Omega| u_i(x) =0\right\}$ the nodal set of $u_i$, $i=1,2$. Let us remark that, by the continuity of $u_i$, $N_i$ is a closed set, furthermore, by the unique continuation property for \eqref{sch}, we also know that $N_i$ has zero Lebesgue measure. We decompose $\Omega \setminus (N_1\cup N_2)$ into its connected components $\Omega_j$, we recall that such components are open and countably many. We observe that, by \eqref{interror} and by the continuity of $u_1,u_2$
\begin{equation*}
u_2^2\leq {K\varepsilon} + K^2u_1^2 \, \text{  everywhere in } \overline{\Omega} \ , 
\end{equation*}
consequently, we have $u_2^2\leq K\varepsilon$ on $N_1$, and analogously, $u_1^2\leq K\varepsilon$ on $N_2$. Therefore
\begin{equation*}
||u_1|-|u_2||\leq \sqrt{K\varepsilon} \, \text{ on } N_1\cup N_2 \ , 
\end{equation*}
hence, noticing that $\partial \Omega_j \subset N_1\cup N_2 \cup \partial \Omega$ and using \eqref{bdryerror} we have
\begin{equation}\label{bdryj}
||u_1|-|u_2||\leq \sqrt{K\varepsilon} \, \text{ on } \partial \Omega_j \text{ for every } j \ . 
\end{equation}
Let us fix one component $\Omega_j$. Note that in $\Omega_j$ $u_1, u_2$ have constant sign, possibly different. Since our aim is to estimate the difference $|u_1|-|u_2|$, we are allowed to change the signs of $u_1, u_2$ and may assume, without loss of generality, that $u_1, u_2$ are both positive in $\Omega_j$. We introduce the function
\begin{equation*}
\varphi^+=\left[u_1- u_2 - 2 \sqrt{K\varepsilon}\right]^+ \ , 
\end{equation*}
where $\left[\cdot\right]^+$ denotes the positive part. By \eqref{bdryj} and by continuity, we have $|u_1- u_2|<2 \sqrt{K\varepsilon}$ on a neighborhood of  $\partial \Omega_j$, hence $\varphi^+=0$ near $\partial \Omega_j$. If we define $\psi_i= u_i\varphi^+$, $i=1,2$, we obtain that $\psi_i \in W_0^{1,2}(\Omega_j)$, hence by the weak formulation of \eqref{sch} we obtain
\begin{eqnarray*}
\int_{\Omega_j} \nabla u_1 \cdot \nabla \psi_1 &=&\int_{\Omega_j} q_1u_1 \psi_1 =\int_{\Omega_j} q_1u_1^2 \varphi^+  \ , \\
\int_{\Omega_j} \nabla u_2 \cdot \nabla \psi_2 &=&\int_{\Omega_j} q_2u_2^2 \varphi^+ \ ,
\end{eqnarray*}
and subtracting
\begin{eqnarray*}
\int_{\Omega_j} \left[\nabla u_1 \cdot \nabla (u_1 \varphi^+) -\nabla u_1 \cdot \nabla (u_2 \varphi^+) +\nabla u_1 \cdot \nabla (u_2 \varphi^+)- \nabla u_2 \cdot \nabla (u_2 \varphi^+)\right] = \\ = \int_{\Omega_j} \left(q_1u_1^2-q_2u_2^2\right) \varphi^+ \ .
\end{eqnarray*}
Consequently, denoting
\begin{eqnarray*}
I_1&=&\int_{\Omega_j} \nabla u_1 \cdot \nabla \left((u_1-u_2) \varphi^+\right)\ ,\\I_2&=&\int_{\Omega_j}\nabla (u_1-u_2) \cdot \nabla (u_2 \varphi^+)\ ,  \\ I_3&=& \int_{\Omega_j} \left(q_1u_1^2-q_2u_2^2\right) \varphi^+ \ ,
\end{eqnarray*}
we have
\begin{equation*}
I_1+I_2 = I_3 \ .
\end{equation*}
Recalling once more the weak formulation of \eqref{sch}, we evaluate
\begin{eqnarray*}
I_1=\int_{\Omega_j} q_1 u_1 (u_1-u_2) \varphi^+ = \\ = \int_{(u_1-u_2)>2\sqrt{K\varepsilon}} q_1 u_1 (u_1-u_2) (u_1-u_2-2\sqrt{K\varepsilon})\ge\\ \ge \frac{1}{K}\int_{(u_1-u_2)>2\sqrt{K\varepsilon}}|u_1|  (u_1-u_2-2\sqrt{K\varepsilon})^2 \ .
\end{eqnarray*}
Note that here, and in what follows, it is understood that the domain of integration is  a subset of $\Omega_j$.
Next, we compute
\begin{eqnarray*}
I_2=\int_{(u_1-u_2)>2\sqrt{K\varepsilon}} u_2 |\nabla(u_1-u_2)|^2 +\\+ \int_{(u_1-u_2)>2\sqrt{K\varepsilon}}(u_1-u_2-2\sqrt{K\varepsilon})\nabla(u_1-u_2-2\sqrt{K\varepsilon})\cdot \nabla u_2
\ge \\\ge\frac{1}{2}\int_{(u_1-u_2)>2\sqrt{K\varepsilon}}\nabla(u_1-u_2-2\sqrt{K\varepsilon})^2 \cdot \nabla u_2=\\= \frac{1}{2}\int_{(u_1-u_2)>2\sqrt{K\varepsilon}}q_2 u_2  (u_1-u_2-2\sqrt{K\varepsilon})^2 \ge\\ \ge \frac{1}{2K}\int_{(u_1-u_2)>2\sqrt{K\varepsilon}}|u_2|  (u_1-u_2-2\sqrt{K\varepsilon})^2\ ,
\end{eqnarray*}
hence, adding up
\begin{eqnarray*}
I_1+I_2\ge  \frac{1}{2K}\int_{(u_1-u_2)>2\sqrt{K\varepsilon}}\left(|u_1|+|u_2|\right)  (u_1-u_2-2\sqrt{K\varepsilon})^2\ .
\end{eqnarray*}
Regarding the third integral, we observe that
\begin{eqnarray*}
I_3 \le \varepsilon\int_{(u_1-u_2)>2\sqrt{K\varepsilon}} \left(|u_1|+|u_2|+2\sqrt{K\varepsilon}\right) \le\\ \le 2\varepsilon\int_{(u_1-u_2)>2\sqrt{K\varepsilon}} \left(|u_1|+|u_2|\right) \ .
\end{eqnarray*}
Consequently, we obtain
\begin{eqnarray*}
\int_{(u_1-u_2)>2\sqrt{K\varepsilon}}\left(|u_1|+|u_2|\right)  (u_1-u_2-2\sqrt{K\varepsilon})^2 \le\\ \le 4K\varepsilon\int_{(u_1-u_2)>2\sqrt{K\varepsilon}} \left(|u_1|+|u_2|\right) \ ,
\end{eqnarray*}
and the analogous estimate holds if we interchange the roles f $u_1$ and $u_2$. Hence we arrive at
\begin{eqnarray*}
\int_{||u_1|-|u_2||>2\sqrt{K\varepsilon}}\left(|u_1|+|u_2|\right)  (||u_1|-|u_2||-2\sqrt{K\varepsilon})^2 \le\\ \le 4K\varepsilon\int_{||u_1|-|u_2||>2\sqrt{K\varepsilon}} \left(|u_1|+|u_2|\right) \ ,
\end{eqnarray*}
now, by triangle inequality
\begin{eqnarray*}
(|u_1|-|u_2|)^2 \le \left(\left|||u_1|-|u_2||-2\sqrt{K\varepsilon}\right|+2\sqrt{K\varepsilon}\right)^2 \le\\ \le 2\left(\left|||u_1|-|u_2||-2\sqrt{K\varepsilon}\right|^2+4K\varepsilon\right) \ ,
\end{eqnarray*}
therefore
\begin{eqnarray*}
\int_{||u_1|-|u_2||>2\sqrt{K\varepsilon}}\left(|u_1|+|u_2|\right)  \left(|u_1|-|u_2|\right)^2 \le\\ \le 16K\varepsilon\int_{||u_1|-|u_2||>2\sqrt{K\varepsilon}} \left(|u_1|+|u_2|\right) \ .
\end{eqnarray*}
It remains to consider the subset of $\Omega_j$ where $||u_1|-|u_2||\le 2\sqrt{K\varepsilon}$, in this case it is a straightforward matter to obtain
\begin{eqnarray*}
\int_{||u_1|-|u_2||\le2\sqrt{K\varepsilon}}\left(|u_1|+|u_2|\right)  \left(|u_1|-|u_2|\right)^2 \le\\ \le 4K\varepsilon\int_{||u_1|-|u_2||\le2\sqrt{K\varepsilon}} \left(|u_1|+|u_2|\right) \ ,
\end{eqnarray*}
consequently
\begin{eqnarray*}
\int_{\Omega_j}\left(|u_1|+|u_2|\right)  \left(|u_1|-|u_2|\right)^2 \le\\ \le 16K\varepsilon\int_{\Omega_j} \left(|u_1|+|u_2|\right) \ ,
\end{eqnarray*}
and adding up with respect to $j$
\begin{eqnarray*}
\int_{\Omega}\left(|u_1|+|u_2|\right)  \left(|u_1|-|u_2|\right)^2 \le\\ \le 16K\varepsilon\int_{\Omega} \left(|u_1|+|u_2|\right) \ ,
\end{eqnarray*}
finally using \eqref{E} we arrive at \eqref{wstab}.
\end{proof}

\section{Quantitative estimates of unique continuation}
We begin by recalling the following version of a quantitative estimate of unique continuation, which is well-known to be very useful in the treatment of various inverse problems, see for instance \cite{alrossSIAM},  a proof can be found in \cite[Theorem 5.3]{arrv}.
\begin{theorem} [Lipschitz propagation of smallness]
  \label{theo:LipPropSmall}
  Let the assumptions of Theorem \ref{negint} be satisfied. For every $r >0$ and for
  every $x \in \Omega_{r}$, we have
\begin{equation}
  \label{eq:p2-e4}
  \int_{B_{r}(x)} u^2  \geq C
  \int_{\Omega}  u^2  \ ,
\end{equation}
where  $C>0$ only depends on $r, K, E/H$ and on 
 $\rho, M, |\Omega|$ .
\end{theorem}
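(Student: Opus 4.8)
This statement is recalled from the literature (proved, in greater generality, in \cite[Theorem~5.3]{arrv}), so I only sketch the standard route one would take. The plan rests on two ingredients: the \emph{three-sphere inequality} for solutions of \eqref{sch}, and a \emph{chaining} argument propagating a lower bound for the local $L^{2}$-mass of $u$ from one interior ball to every other. The a priori hypotheses would enter thus: \eqref{K} controls the constants in the three-sphere inequality; \eqref{H} together with \eqref{K} gives the quantitative nondegeneracy $\int_{\Omega}u^{2}\ge H^{2}/K>0$; and \eqref{E} supplies both the a priori upper bound $\int_{\Omega}u^{2}\le E^{2}$ and, via the Sobolev inequality, the control of $u$ near the Lipschitz boundary needed at the last step. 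Since \eqref{eq:p2-e4} is homogeneous of degree two in $u$, the constant $C$ can depend on $u$ only through the scale-invariant quantities $K$ and $E/H$.

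\emph{Interior mechanism.} From a Carleman estimate for $\Delta+q$ with $\|q\|_{L^{\infty}}\le K$ one has $\bar s>0$, $\theta\in(0,1)$ and $C_{0}\ge 1$, depending only on $n$ and $K$, with
\[
\int_{B_{2s}(y)}u^{2}\ \le\ C_{0}\Big(\int_{B_{s}(y)}u^{2}\Big)^{\theta}\Big(\int_{B_{4s}(y)}u^{2}\Big)^{1-\theta}\qquad\text{whenever }B_{4s}(y)\subset\Omega,\ \ 0<s\le\bar s .
\]
Fix $r>0$; reducing it if necessary --- harmlessly, as the left-hand side of \eqref{eq:p2-e4} increases with $r$ while $\Omega_{r}$ shrinks --- I would take $r$ so small, in terms of $\rho$ and $M$, that $\Omega_{r/2}$ is connected and any two of its points are joined by a chain of consecutive points at mutual distance $\le r/8$ and length $\le N$, with $N$ bounded by $\rho,M,|\Omega|,r$. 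Writing $\mu=\int_{\Omega}u^{2}$ and $G(y)=\int_{B_{r/4}(y)}u^{2}$, the three-sphere inequality at scale $s=r/8$, together with $B_{r/8}(y')\subset B_{r/4}(y)$ when $|y-y'|\le r/8$ and $\int_{B_{r/2}(y')}u^{2}\le\mu$, yields $G(y')\le C_{0}\,G(y)^{\theta}\mu^{1-\theta}$, i.e. $g(y')\le C_{0}\,g(y)^{\theta}$ with $g:=G/\mu\in(0,1]$. Iterating along a chain and covering $\Omega_{r/2}$ by $N_{1}$ balls of radius $r/4$ centred in $\Omega_{r/2}$ (with $N_{1}$ bounded by $\rho,M,|\Omega|,r$) gives, for every $x\in\Omega_{r}$,
\[
\int_{\Omega_{r/2}}u^{2}\ \le\ N_{1}\,C_{0}^{1/(1-\theta)}\,\mu^{1-\theta^{N}}\Big(\int_{B_{r}(x)}u^{2}\Big)^{\theta^{N}} .
\]

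\emph{Boundary collar and main obstacle.} It would remain to absorb $\int_{\Omega\setminus\Omega_{r/2}}u^{2}$. I would decompose the Lipschitz collar into dyadic shells $\{\,2^{-k-1}r<\mathrm{dist}(\cdot,\partial\Omega)\le 2^{-k}r\,\}$, cover the $k$-th shell by $\lesssim 2^{k(n-1)}$ balls of radius $\sim 2^{-k}r$ centred at distance $\sim 2^{-k}r$ from $\partial\Omega$ (the count controlled by $M$), and use the three-sphere inequality at scale $2^{-k}r$ to propagate the smallness of $\int_{B_{r}(x)}u^{2}$ into each such ball; the decay of $\int_{B_{s}(y)}u^{2}$ as $s\to 0$ furnished by \eqref{E} through the Sobolev inequality is what makes the series over $k$ converge, giving $\int_{\Omega\setminus\Omega_{r/2}}u^{2}\le C\,\mu^{1-\theta'}\big(\int_{B_{r}(x)}u^{2}\big)^{\theta'}$ for some $\theta'\in(0,1)$. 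Adding the two estimates, dividing by the appropriate power of $\mu$ --- legitimate since $\mu\ge H^{2}/K>0$ by \eqref{H} and \eqref{K} --- and solving for $\int_{B_{r}(x)}u^{2}$ would yield \eqref{eq:p2-e4} with $C$ of the stated dependence. The interior mechanism is routine once the three-sphere inequality is available; the hard part will be this near-boundary analysis, together with checking that the connectedness of $\Omega_{r/2}$, the chain lengths, the covering numbers and the dyadic collar decomposition are all controlled \emph{quantitatively} by $\rho,M,|\Omega|,r$ alone, and then tracking how these, $\theta$, $\theta'$ and $C_{0}$ combine into the final $C$.
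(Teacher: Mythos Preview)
The paper does not prove this theorem: it is quoted as a known tool, with a reference to \cite[Theorem~5.3]{arrv} for the proof. You recognize this at the outset, so at the level of what the paper actually does, your treatment coincides with it. Your sketch of the interior mechanism --- three-sphere inequality from a Carleman estimate, then chaining through a connected $\Omega_{r/2}$ with chain lengths and covering numbers controlled by $\rho,M,|\Omega|,r$ --- is the standard route and is essentially what one finds in \cite{arrv}.

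For the boundary collar your outlined strategy is not the one used there, and as written it has a difficulty you have not addressed: propagating the three-sphere inequality from $B_{r}(x)$ into balls of radius $\sim 2^{-k}r$ in the $k$-th dyadic shell requires chains whose length grows with $k$, so the H\"older exponent one obtains degrades like $\theta^{\,c k}\to 0$, and the Sobolev decay of $\int_{B_{s}}u^{2}$ does not by itself produce a bound of the form $C\,\mu^{1-\theta'}\big(\int_{B_{r}(x)}u^{2}\big)^{\theta'}$ with a fixed $\theta'>0$. The argument in \cite{arrv} sidesteps this: the a~priori bound \eqref{E} together with the Lipschitz character of $\partial\Omega$ gives directly $\int_{\Omega\setminus\Omega_{d}}u^{2}\le \omega(d)\,E^{2}$ with $\omega(d)\to 0$; since $\int_{\Omega}u^{2}\ge H^{2}/K$ by \eqref{H} and \eqref{K}, one fixes $d$ depending only on $E/H,K,\rho,M$ so that $\int_{\Omega_{d}}u^{2}\ge \tfrac{1}{2}\int_{\Omega}u^{2}$, and then the purely interior chaining, carried out at that fixed scale, yields \eqref{eq:p2-e4}. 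This is simpler than the dyadic summation you propose and explains cleanly why the final constant depends on the a~priori data only through $E/H$.
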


Also the following theorem is a manifestation of the strong unique continuation property, its original version is due to Garofalo and Lin \cite{garofalolin1, garofalolin2} , the present global formulation is indeed a consequence of the previous Theorem \ref{theo:LipPropSmall}, for the details of a proof we may refer to \cite[Theorem 3.4]{amrv}.
 \begin{theorem} [Doubling inequality] \label{doub}
  Let the assumptions of Theorem \ref{negint} be satisfied. For every $\overline{r} >0$ and for
  every $x \in \Omega_{2\overline{r} }$, we have
\begin{equation}
  \label{doubin}
  \int_{B_{2r}(x)}  u^2   \le C
  \int_{B_{r}(x)}  u^2   \text{ for every } r\le \overline{r} \ ,
\end{equation}
where  $C>0$ only depends on $\overline{r} , K, E/H$ and on 
 $\rho, M, |\Omega|$ .
\end{theorem}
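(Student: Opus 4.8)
The plan is to prove the doubling inequality by means of Almgren's frequency function, following Garofalo and Lin, and to extract the \emph{uniform} doubling constant by feeding in the Lipschitz propagation of smallness, Theorem \ref{theo:LipPropSmall}. Fix $x\in\Omega_{2\overline r}$, so that $\operatorname{dist}(x,\partial\Omega)>2\overline r$ and $u$ solves \eqref{sch} on a neighborhood of $\overline{B_{2\overline r}(x)}$. For $0<r<\operatorname{dist}(x,\partial\Omega)$ set
\begin{equation*}
H(r)=\int_{\partial B_r(x)} u^2\,d\sigma\ ,\qquad D(r)=\int_{B_r(x)}\left(|\nabla u|^2-qu^2\right)\ ,\qquad N(r)=\frac{rD(r)}{H(r)}\ .
\end{equation*}
This is well defined: by \eqref{H} and \eqref{K} one has $\int_\Omega u^2\ge K^{-1}\int_\Omega qu^2\ge K^{-1}H^2>0$, so $u\not\equiv 0$ and, by the unique continuation already invoked for \eqref{sch}, $H(r)>0$ for a.e.\ $r$. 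The whole proof reduces to producing a bound $N(r)\le N_0$ on scales comparable to $\overline r$, with $N_0$ depending only on the admissible data; the classical Garofalo--Lin machinery then converts this into \eqref{doubin}.

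First I would establish the almost-monotonicity of $N$. Differentiating $H$ gives the identity $H'(r)=\frac{n-1}{r}H(r)+2D(r)$, whence $\frac{d}{dr}\log H(r)=\frac{n-1}{r}+\frac{2N(r)}{r}$; differentiating $N$ and using the Rellich--Pohozaev identity for \eqref{sch} yields a differential inequality showing that, up to an additive constant absorbing the zeroth order term, $e^{Cr}N(r)$ is non-decreasing, with $C$ depending only on $K$ through the bound \eqref{K} on $q$. Hence it is enough to control $N$ at the top scale. Integrating the logarithmic identity between two concentric radii then gives, once $N\le N_0$ is known on the relevant interval, the estimate $H(2r)\le 2^{\,n-1+2N_0}H(r)$, and a further integration in $r$ upgrades this boundary doubling to the solid doubling inequality \eqref{doubin} with constant $C=C(N_0,n)$. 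These are the standard manipulations, for which I would refer to \cite{garofalolin1,garofalolin2} and to \cite[Theorem 3.4]{amrv}; in particular the minor bookkeeping of intermediate radii, all comparable to $\overline r$ and fitting inside $\Omega$ thanks to $\operatorname{dist}(x,\partial\Omega)>2\overline r$, is routine. The only genuinely non-trivial point is the \emph{uniformity} of $N_0$, which I treat next.

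The crux, and the expected main obstacle, is exactly this uniform frequency bound, and it is where Theorem \ref{theo:LipPropSmall} is indispensable. The same monotonicity used above also yields an upper bound of the form $N_0\le c\,\log\frac{\int_{B_{2\overline r}(x)}u^2}{\int_{B_{\overline r}(x)}u^2}+c$, so it suffices to cap this ratio. The numerator is controlled from above by the global energy, $\int_{B_{2\overline r}(x)}u^2\le\int_\Omega u^2\le E^2$ by \eqref{E}. For the denominator I would apply \eqref{eq:p2-e4} with radius $\overline r$: since $x\in\Omega_{2\overline r}\subset\Omega_{\overline r}$, Theorem \ref{theo:LipPropSmall} gives $\int_{B_{\overline r}(x)}u^2\ge C_1\int_\Omega u^2\ge C_1K^{-1}H^2$, with $C_1>0$ depending only on $\overline r,K,E/H,\rho,M,|\Omega|$. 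Combining the two bounds caps the ratio, hence $N_0$, by a quantity depending only on those same data and independent of the particular solution $u$ and of $x$. This is precisely the step that would collapse for the highly oscillating solutions of the Introduction in the absence of a propagation of smallness: their interior frequency can be arbitrarily large. Inserting the resulting $N_0$ into the doubling constant of the previous paragraph completes the proof, with $C$ depending only on $\overline r,K,E/H,\rho,M,|\Omega|$, as claimed.
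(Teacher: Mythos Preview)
Your approach is correct and coincides with the one the paper indicates: the paper does not give a self-contained proof of the doubling inequality but states that it follows from the Garofalo--Lin frequency argument combined with Theorem~\ref{theo:LipPropSmall}, referring to \cite[Theorem~3.4]{amrv} for details, and this is precisely the route you sketch. Your identification of the Lipschitz propagation of smallness as the step furnishing the \emph{uniform} frequency bound at the top scale---and hence the uniform doubling constant depending only on the stated a priori data---is exactly the point the paper emphasizes.
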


\begin{proof}[Proof of Theorem \ref{negint}]
Since Garofalo and Lin \cite{garofalolin1}, it is well-known that, as a consequence of the above stated doubling inequality and of the standard local boundedness estimates for solutions to \eqref{sch} \cite[Theorem 8.17]{gilbargtrud}, $u^2$ turns out to be a Mukenhoupt weight, Coifman and Fefferman \cite{coifeff}. More specifically, we obtain that for every $\overline{r} >0$ and for
  every $x \in \Omega_{2\overline{r} }$, there exists $p>1 , C>0$, only depending on $\overline{r}, K, E/H$ and on 
 $\rho, M, |\Omega|$  such that for every  $x \in \Omega_{2\overline{r} }$ and for every $r\le \overline{r}$ we have
	\begin{equation}\label{A_p}
\left(\frac{1}{|B_r(x)|}\int_{B_{r}(x)} u^2\right) \left(\frac{1}{|B_r(x)|}\int_{B_{r}(x)} |u|^{-\frac{2}{p-1}}\right)^{p-1}\le C \ .
\end{equation}
It is a straightforward matter to construct a covering of $\Omega_d$ with balls $B_{d/4}(x_i)$, $i=1,\ldots, N$ such that their doubles $B_{d/2}(x_i)$ stay within $\Omega$ and their number $N$ is is dominated, up to an absolute constant, by $|\Omega|d^{-n}$. Using \eqref{A_p} for each $B_{d/4}(x_i)$ we get
\begin{equation*}
\int_{B_{\frac{d}{4}}(x_i)} |u|^{-\frac{2}{p-1}}\le C |B_{\frac{d}{4}}(x_i)|^{1-\frac{1}{p-1}}\left(\int_{B_{\frac{d}{4}}(x_i)} u^2\right)^{-\frac{1}{p-1}}\ ,
\end{equation*}
hence, recalling \eqref{eq:p2-e4} and adding up with respect to $i=1,\ldots, N$, we arrive at \eqref{eq:negint} with $\delta =\frac{2}{p-1}$.
\end{proof}

\end{document}